\newtheorem{theorem}{Theorem}[section]
\newtheorem{lemma}[theorem]{Lemma}
\theoremstyle{definition}
\newtheorem{definition}[theorem]{Definition}
\theoremstyle{remark}
\newtheorem{remark}[theorem]{Remark}
\numberwithin{equation}{section}
\begin{document}
\setcounter{page}{1}

\title[Schr\"odinger equation on the sphere ]{Sharp Strichartz estimates for the Schr\"odinger equation on the sphere }

\author[D. Cardona]{Duv\'an Cardona}
\address{
  Duv\'an Cardona S\'anchez:
  \endgraf
  Department of Mathematics: Analysis, Logic and Discrete Mathematics
  \endgraf
  Ghent University, Belgium
  \endgraf
  {\it E-mail address} {\rm duvanc306@gmail.com, duvan.cardonasanchez@ugent.be}
  }
  
  \author[L. Esquivel]{Liliana Esquivel}
\address{
 Liliana Esquivel:
  \endgraf
 Gran Sasso Science Institute, CP 67100
  \endgraf 
  L' Aquila, Italia.
  \endgraf
  {\it E-mail address} {\rm liliana.esquivel@gssi.it}
  }

\subjclass[2010]{Primary: 35Q40, { Secondary: 42B35, 42C10, 35K15}.}

\keywords{}

\thanks{The first author was supported by the FWO Odysseus 1 grant G.0H94.18N: Analysis and Partial Differential Equations of Professor Michael Ruzhansky.}

\begin{abstract} 
In this contribution we investigate the  Schr\"ordinger equation associated to the Laplacian on the sphere in the form of sharp Strichartz estimates. We will provided   simple proofs for our main theorems  using purely  the  $L^2\rightarrow L^p$ spectral estimates for the operator norm of the spectral projections (associated to the spherical harmonics)  proved in \cite{KS}. A sharp index of regularity is established for the initial data in spheres of arbitrary dimension $d\geq 2$.  

\end{abstract} \maketitle

\section{Introduction}   

In this work we provide
sharp Strichartz estimates for the Schr\"odinger equation for the Laplacian on the sphere $\mathbb{S}^d,$ 
$\Delta_{\mathbb{S}^d},$ which is given by
\begin{equation}\label{PVI}
   \textnormal{(PVI):}
     \begin{cases}
      iu_{t}+\Delta_{\mathbb{S}^d}u=0,&\quad (t,x)\in \mathbb{T}\times \mathbb{S}^d,
      \\
       u(0,x)=f(x), &\quad x\in \mathbb{S}^d,
     \end{cases}
\end{equation} where $\mathbb{T}\cong [0,2\pi),$ $0\sim 2\pi,$ denotes the one dimensional torus. The fractal nature of the solutions for \eqref{PVI} has been discussed, e.g. in Taylor \cite{Taylor2003} and references therein. To illustrate our results let us recall, that \eqref{PVI} is an analogy of the standard Schr\"odinger equation on $\mathbb{R}^n,$
\begin{equation}\label{SEq}
iu_t(t,x)+\Delta_{\mathbb{R}^n} u(t,x)=0, 
\end{equation} with initial data $u(0,\cdot)=f.$ The solution for \eqref{SEq}, satifies the following Strichartz estimate
\begin{equation}\label{KT'}
\Vert e^{it\Delta_{\mathbb{R}^n}}f\Vert_{L^q[(0,\infty), L_x^p(\mathbb{R}^d)]}\leq C\Vert f\Vert_{L^2(\mathbb{R}^n)},
\end{equation}
which holds, if and only if, $1/q=n/2(1/2-1/p),$ for $n=1$ and $2\leq p\leq \infty,$ $n=2$ and $2\leq p<\infty,$ and, $2\leq p<\frac{2n}{n-2}$ if $n\geq 3.$
This fundamental result proved by Keel and Tao \cite{KeelTao} is the source of inspiration for several extensions of \eqref{SEq} to general compact and non-compact manifolds.

The key point for obtaining \eqref{KT'}, is the following estimate
\begin{equation}\label{Analogue}
    \Vert e^{it\Delta_{\mathbb{R}^n}}\Vert_{L^1(\mathbb{R}^n)\rightarrow L^\infty(\mathbb{R}^n) }\lesssim_{n} C|t|^{-\frac{n}{2}},
\end{equation}which expresses the dispersive property of the linear Schr\"odinger equation on $\mathbb{R}^n.$ If $\mathbb{R}^n$ is replaced by a compact manifold $M,$ the analogue of the dispersion estimate \eqref{Analogue} fails globally in time, as the example of constant solutions shows (see Burq, Gerard and Tzvetkov \cite{BurqGerardT} for details). It is less obvious, but still true, that it also fails locally in time (see the discussion in the beginning of Section 3 of \cite{BurqGerardT}). For the case  of the $d$-dimensional torus, $M=\mathbb{T}^d,$ Bourgain \cite{Bourgain93,Bourgain94,Bourgain99} proposed a different approach to Strichartz estimates by using the Fourier series representation of the Schr\"odinger group,
\begin{equation*}
   u(t,x)\equiv e^{it\Delta_{\mathbb{T}^d}}f(x):=\sum_{k\in \mathbb{Z}^d} e^{i2\pi (t|k|^2+k\cdot x)}\widehat{f}(k). 
\end{equation*} Notice that this representation shows that the solution is also periodic in time. By computing explicitly 
\begin{equation}\label{Bourgain}
     \Vert u \Vert^4_{L^4(\mathbb{T}\times \mathbb{T}^d )}= \Vert u^2 \Vert^2_{L^2(\mathbb{T}\times \mathbb{T}^d )},
\end{equation}   Bourgain obtains similar Strichartz estimates as in the fundamental paper due to Keel and Tao \cite{KeelTao}.

In general, on a compact manifold without boundary $M,$ it was proved by Burq, Gerard and Tzvetkov \cite{BurqGerardT} the following estimate
\begin{align}\label{Burq}
\Vert e^{it\Delta_{M}}f\Vert_{L^q[[0,T], L_x^p(M)]}\lesssim_{T,p,q,s} \Vert f\Vert_{W^{s}(M)},\,\,s\geq \frac{1}{q},
\end{align}for $1/q=\textnormal{dim}(M)/2(1/2-1/p),$ $q\geq 2,$ and  $p<\infty.$ As in our case, Taylor in \cite{Taylor2003} considered the case of spheres $M=\mathbb{S}^d,$ where the author obtained sharp $L^p$-estimates on the solution operator at those times which are rational multiples of $\pi.$ This idea that the profile of linear dispersive equations depend on the algebraic properties of time have been further exploited in the papers of Kapitanski-Rodniaski \cite{KapitanskiRodniaski99}, Taylor  \cite{Taylor2003} and Rodnianski \cite{Rodnianski} where the fractal nature for the solution of the Schr\"odinger equation on spheres and torus were considered. 

Our main goal is to study \eqref{PVI}, by using the $L^2\rightarrow L^p$ bounds for the spectral resolution of  the Laplacian $\Delta_{\mathbb{S}^d}.$ More precisely, we deal with the problem of establishing an analogy of \eqref{SEq} in the case of the Laplacian on the sphere $\mathbb{S}^d.$  Our main results are, the following Theorem \ref{duvantheorem}, and Theorem \ref{LiliTheorem}, where the Schr\"odinger equation with potential \eqref{PVI'}, (this means, associated to $-\Delta_{\mathbb{S}^d}+V(x,t),$ for a suitable potential $V(x,t)$) is considered. 

\begin{theorem}\label{duvantheorem}
Let $d\geq 2$ and $p,q$ satisfy $2\leq p\leq \infty,$ and $2\leq q<\infty.$ Then the following Strichartz estimate
\begin{equation}\label{TheoremM}
     \Vert e^{it\Delta_{\mathbb{S}^d}}f(z) \Vert_{L^{p}_z[\mathbb{S}^{d},\,L^q_t(\mathbb{T})]}\lesssim_{p,q,s} \Vert f \Vert_{W^{s}(\mathbb{S}^d)},\footnote{Observe that under the symmetric compromise  $p=q,$ $\Vert u(t,z) \Vert_{L^{p}_z[\mathbb{S}^{d},\,L^q_t(\mathbb{T})]}=\Vert u(t,z) \Vert_{L^{p}(\mathbb{S}^{d}\times \mathbb{T})},$ and for $p=4,$ $  \Vert u \Vert^4_{ L^4(\mathbb{S}^d\times\mathbb{T}  )}= \Vert u^2 \Vert^2_{L^2(\mathbb{S}^d\times \mathbb{T})} .$ We use the notation $A\lesssim B$ to indicate $A\leq cB,$ where $c>0$ does not depend on fundamental quantities.}
\end{equation}holds true for $s\geq \varkappa_{p,q},$ where
$$   \varkappa_{p,q}: = \left\{
     \begin{array}{lr}
       \frac{d-1}{2}\left(\frac{1}{2}-\frac{1}{p}\right)+\left(\frac{1}{2}-\frac{1}{q}\right), &   2\leq p\leq \frac{2(d+1)}{d-1},\\
       d\left(\frac{1}{2}-\frac{1}{p}\right)-\frac{1}{q}, &   p>\frac{2(d+1)}{d-1}. \\

     \end{array}
   \right.  $$ 
 For $q=2,$ and $s<\varkappa_{p,2},$ the estimate in \eqref{TheoremM} does not hold for any non-trivial initial data $f\in L^2(\mathbb{S}^d)$. So, in this case, the regularity order $\varkappa_{p,2} $ is sharp in any dimension $d,$ in the sense that \eqref{TheoremM}  does not hold for all $s<\varkappa_{p,2} $.
\end{theorem}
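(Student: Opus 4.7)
The plan is to exploit the spectral decomposition of $e^{it\Delta_{\mathbb{S}^d}}$ into spherical harmonics, apply Hausdorff--Young on $\mathbb{T}$ in the time variable, use Minkowski in the spatial variable, and close the estimate with the $L^2\to L^p$ spectral projector bound of \cite{KS}. The structural observation driving the argument is that the eigenvalues $\lambda_n:=n(n+d-1)$ of $-\Delta_{\mathbb{S}^d}$ are non-negative integers and are distinct as $n$ varies, so that if $\pi_n$ denotes the orthogonal projector onto the spherical harmonics of degree $n$, then
\[ u(t,x)=e^{it\Delta_{\mathbb{S}^d}}f(x)=\sum_{n\geq 0}e^{-it\lambda_n}\pi_n f(x) \]
is, for each fixed $x$, a genuine Fourier series on $\mathbb{T}$. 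Hausdorff--Young in $t$ (valid since $q\geq 2$ and hence $q'\leq 2$) followed by Minkowski in $x$ (valid since $p\geq 2\geq q'$) then yields
\[ \|u\|_{L^p_xL^q_t}\lesssim\Bigl(\sum_{n\geq 0}\|\pi_nf\|_{L^p(\mathbb{S}^d)}^{q'}\Bigr)^{1/q'}. \]

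Next, I insert the spectral projector bound of \cite{KS}, namely $\|\pi_nf\|_{L^p(\mathbb{S}^d)}\lesssim n^{\sigma(p)}\|\pi_nf\|_{L^2(\mathbb{S}^d)}$, with Sogge exponent $\sigma(p)=\frac{d-1}{2}(\frac{1}{2}-\frac{1}{p})$ for $2\leq p\leq\frac{2(d+1)}{d-1}$ and $\sigma(p)=d(\frac{1}{2}-\frac{1}{p})-\frac{1}{2}$ otherwise; a direct inspection shows $\sigma(p)+\frac{1}{2}-\frac{1}{q}=\varkappa_{p,q}$ in both regimes. Setting $b_n=(1+\lambda_n)^{s/2}\|\pi_nf\|_{L^2}$, so that $\|f\|_{W^s}^2\simeq\sum_n b_n^2$, the previous display becomes the weighted sequence inequality
\[ \|u\|_{L^p_xL^q_t}^{q'}\lesssim\sum_{n\geq 1}n^{q'(\sigma(p)-s)}b_n^{q'}. \]
For $q=2$ the weight $n^{2(\sigma(p)-s)}$ is uniformly bounded exactly when $s\geq\sigma(p)=\varkappa_{p,2}$, and the right-hand side collapses to $\|f\|_{W^s}^2$, closing the case with the endpoint included. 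For $q>2$, Hölder with conjugate exponents $2/q'$ and $2/(2-q')$ produces the bound on the full range $s>\varkappa_{p,q}$, and the critical endpoint is attained through a dyadic Littlewood--Paley refinement in the spectral variable (or, alternatively, via complex interpolation against the unambiguous $q=2$ estimate).

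For the sharpness of $\varkappa_{p,2}$, I would take $f$ to be an $L^2$-normalized extremizer of the Sogge bound of \cite{KS}: a zonal spherical harmonic of degree $n$ in the range $p>\frac{2(d+1)}{d-1}$, or a highest-weight (Gaussian beam) spherical harmonic of degree $n$ in the range $p\leq\frac{2(d+1)}{d-1}$. For such $f$ one has $\|f\|_{L^p}\simeq n^{\sigma(p)}$ and $\|f\|_{W^s}\simeq n^s$. Because $f$ is a single Laplace eigenfunction, $e^{it\Delta_{\mathbb{S}^d}}f(x)=e^{-it\lambda_n}f(x)$ has unimodular time-phase, so $\|u(\cdot,x)\|_{L^2_t}=\sqrt{2\pi}\,|f(x)|$ pointwise in $x$, whence $\|u\|_{L^p_xL^2_t}\simeq n^{\sigma(p)}$. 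Inserting this into \eqref{TheoremM} and letting $n\to\infty$ forces $s\geq\sigma(p)=\varkappa_{p,2}$.

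The most delicate step will be reaching the critical endpoint $s=\varkappa_{p,q}$ for $q>2$: the direct Hölder step on the weighted sum sits precisely on the borderline of divergence there, so it must be bypassed by a finer Littlewood--Paley decomposition or an interpolation argument. A secondary structural point is that the integrality of $\lambda_n$ is used essentially to enable the Hausdorff--Young step on $\mathbb{T}$ in the first place, so the method is strongly tailored to the sphere and does not transfer verbatim to a generic compact manifold.
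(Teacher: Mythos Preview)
Your approach is essentially the paper's: exploit that $u(\cdot,x)=\sum_n e^{-it\lambda_n}\pi_nf(x)$ is a Fourier series on $\mathbb{T}$ with distinct integer frequencies $\lambda_n$, bound the $L^q_t$-norm pointwise in $x$, push the $L^p_x$-norm inside via Minkowski (using $p\ge 2$), and close with the sharp $L^2\to L^p$ projector bound $\|\pi_n\|_{2\to p}\lesssim n^{\sigma(p)}$ of \cite{KS}. For $q=2$ your Plancherel step is exactly the paper's orthogonality identity (their first lemma, giving $\|u(\cdot,x)\|_{L^2_t}^2=2\pi\sum_n|\pi_nf(x)|^2$), and your sharpness argument via a single spherical harmonic extremizing the projector bound is precisely theirs.

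The one substantive divergence is how the $L^q_t$-norm is handled for $q>2$. You use Hausdorff--Young to land in $\ell^{q'}_n$ and then attempt H\"older $\ell^{q'}\to\ell^2$; as you correctly flag, that step sits exactly on the divergent borderline and yields only $s>\varkappa_{p,q}$. The paper bypasses $\ell^{q'}$ entirely: it goes from $L^q_t$ to a \emph{weighted} $\ell^2_n$ in one stroke via a Sobolev/Wainger embedding on $\mathbb{T}$, obtaining $\|u(\cdot,x)\|_{L^q_t}\lesssim\bigl(\sum_n n^{2s_q}|\pi_nf(x)|^2\bigr)^{1/2}$ with $s_q=\tfrac12-\tfrac1q$, after which Minkowski and the projector bound give the endpoint $s=\varkappa_{p,q}=\sigma(p)+s_q$ with no borderline sum. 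Your proposed fixes do not actually close the gap: complex interpolation between the $q=2$ endpoint and a \emph{non}-endpoint bound at some $q_1>q$ produces an interpolated index $s_\theta=\sigma(p)+\theta(s_1-\sigma(p))$ with $s_1>\varkappa_{p,q_1}$, hence $s_\theta>\varkappa_{p,q}$ strictly for every choice of $q_1,s_1$; and ``dyadic Littlewood--Paley refinement in the spectral variable'' is not specified enough to see how it would recover the loss. If you want the endpoint for $q>2$ as stated, replace the Hausdorff--Young step by the Sobolev-type embedding in $t$ used in the paper, so that you land directly in weighted $\ell^2_n$ rather than in $\ell^{q'}_n$.
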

By keeping the notation $\varkappa_{p,2}$ in Theorem \ref{duvantheorem}, we consider the case of Schr\"odinger equation associated with the Hamiltonian $$\mathbb{H}:=-\Delta_{\mathbb{S}^d}+V(x,t).$$ Indeed, we prove:
\begin{theorem}\label{LiliTheorem} Let us consider the following Schr\"odinger equation with potential
\begin{equation}\label{Lilicase}
   \textnormal{(PVI'):}
     \begin{cases}
      iu_{t}=(-\Delta_{\mathbb{S}^d}+V(x,t))u,&\quad (t,x)\in \mathbb{T}\times \mathbb{S}^d,
      \\
       u(0,x)=f(x), &\quad x\in \mathbb{S}^d
     \end{cases}
\end{equation} where    $\Vert V \Vert_{L^{q}_x(\mathbb{S}^d,L^\infty_t(\mathbb{T}))}$ is small enough. Let us assume that  $\frac{1}{q}+\frac{1}{p/2}=1.$ Then, there exists a unique solution  $u\in {C}(\mathbb{T}, W_x^{s}(\mathbb{S}^d))\cap L^p_x(\mathbb{S}^d, L^2_t(\mathbb{T}))$ for \eqref{Lilicase}, provided that $s\geq \varkappa_{p,2}$
\end{theorem}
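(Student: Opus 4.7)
The plan is to recast \eqref{Lilicase} via Duhamel's formula and solve it by contraction mapping, with Theorem \ref{duvantheorem} (specialised to $q=2$) as the master linear input. Writing the equation as $u_t = i\Delta_{\mathbb{S}^d}u - iV(x,t)u$ gives the equivalent integral formulation
\begin{equation*}
u(t) = \Phi(u)(t) := e^{it\Delta_{\mathbb{S}^d}}f - i\int_0^t e^{i(t-s)\Delta_{\mathbb{S}^d}}\,V(\cdot,s)\,u(\cdot,s)\,ds,
\end{equation*}
whose fixed point will be the required solution. The natural Banach space to work in is
\begin{equation*}
X := C(\mathbb{T},\,W^s(\mathbb{S}^d))\cap L^p_x(\mathbb{S}^d,\,L^2_t(\mathbb{T})),\qquad \|u\|_X := \|u\|_{C_tW^s} + \|u\|_{L^p_xL^2_t}.
\end{equation*}

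Three linear ingredients will drive the estimate. First, Theorem \ref{duvantheorem} with $q=2$ and $s\geq \varkappa_{p,2}$, combined with the unitarity of $e^{it\Delta_{\mathbb{S}^d}}$ on $W^s(\mathbb{S}^d)$, delivers $\|e^{it\Delta_{\mathbb{S}^d}}f\|_X\lesssim \|f\|_{W^s}$, so the free part lies in $X$. Second, the hypothesis $\frac{1}{q}+\frac{1}{p/2}=1$ is exactly the H\"older exponent identity $\frac{1}{p'}=\frac{1}{q}+\frac{1}{p}$, which yields
\begin{equation*}
\|V u\|_{L^{p'}_xL^2_t}\leq \|V\|_{L^q_xL^\infty_t}\,\|u\|_{L^p_xL^2_t}.
\end{equation*}
Third, a $TT^{*}$ duality argument applied to the bound of Theorem \ref{duvantheorem}, together with the Christ--Kiselev lemma to turn the full time integral into a retarded one, produces the inhomogeneous Strichartz estimate
\begin{equation*}
\Big\|\int_0^t e^{i(t-s)\Delta_{\mathbb{S}^d}}F(s)\,ds\Big\|_{L^p_xL^2_t}\lesssim \|F\|_{L^{p'}_xL^2_t},
\end{equation*}
which I will apply with $F=Vu$. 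Chaining these three estimates yields
\begin{equation*}
\|\Phi(u)-\Phi(v)\|_{L^p_xL^2_t}\leq C\,\|V\|_{L^q_xL^\infty_t}\,\|u-v\|_{L^p_xL^2_t},
\end{equation*}
which is a strict contraction in the $L^p_xL^2_t$-component of $X$ as soon as $\|V\|_{L^q_xL^\infty_t}<1/C$, and the Banach fixed-point theorem produces a unique $u$ solving \eqref{Lilicase}; the uniqueness in $X$ is automatic from the contraction inequality applied to the difference of two candidate solutions.

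The main obstacle I anticipate is propagating the $W^s$-regularity through the Duhamel term, since at face value $Vu$ lies only in $L^{p'}_xL^2_t$ and multiplication by the rough potential $V$ need not preserve the Sobolev scale. My plan is to bypass this by approximating $V$ in $L^q_xL^\infty_t$ by smooth potentials $V_n$, for which standard propagator theory provides classical $C_tW^s$-solutions $u_n$ of the corresponding equation. The same chain of estimates applied to the difference $u_n-u_m$ gives a bound by $\|V_n-V_m\|_{L^q_xL^\infty_t}$, so $\{u_n\}$ is Cauchy in $X$; exploiting the identity $u(t)-e^{it\Delta_{\mathbb{S}^d}}f = -ie^{it\Delta_{\mathbb{S}^d}}\int_0^t e^{-is\Delta_{\mathbb{S}^d}}V(s)u(s)\,ds$ and the unitarity of $e^{it\Delta_{\mathbb{S}^d}}$ on $W^s$, the limit inherits $C_tW^s$-continuity and coincides with the fixed point of $\Phi$.
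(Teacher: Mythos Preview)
Your proposal is correct and follows essentially the same route as the paper: Duhamel formulation, contraction mapping in $X=C_tW^s\cap L^p_xL^2_t$, Theorem~\ref{duvantheorem} for the homogeneous term, a $TT^*$/duality inhomogeneous Strichartz estimate for the Duhamel integral, and the H\"older relation $\tfrac{1}{p'}=\tfrac{1}{q}+\tfrac{1}{p}$ to control $\|Vu\|_{L^{p'}_xL^2_t}$. If anything, you are more careful than the paper on two points it glosses over---invoking Christ--Kiselev to pass from the full to the retarded time integral, and addressing the propagation of $C_tW^s$-regularity through the rough potential via approximation---so your argument is a strict refinement of theirs.
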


This paper is organised as follows. In Section \ref{Preliminaries} we present some basic properties about the spectral decomposition of the Laplacian and we introduce some auxiliar (Triebel-Lizorkin) spaces, that will be useful in our analysis. In Section \ref{t1}  we prove our Theorem \ref{duvantheorem}. Finally, in Section  \ref{potential} we consider the Schr\"odinger equation with potential. 
We finish this introduction by  discussing briefly our main results.
\begin{remark} For $M=\mathbb{S}^d,$ let us observe that \eqref{Burq} is in general a better estimate (in the case when $p>2(d+1)/(d-1)$) that the obtained in Theorem
\ref{duvantheorem} in the case, where, $$1/q=d/2(1/2-1/p),\footnote{By following the usual nomenclature,   such pair $(p,q)$ is called admissible.}$$ but, both estimates agree for $2\leq p\leq 2(d+1)/(d-1).$ However, the main point in Theorem \ref{duvantheorem} is the sharpness of the index $\varkappa_{p,2}$ in any dimension, and that the estimate that we obtain is in general independent of the known restriction $1/q=\textnormal{dim}(M)/2(1/2-1/p),$ between the dimension $d,$ and the pair $(p,q).$ Several sharp results on the sphere are know only for slow dimensions $d,$ (see e.g. Taylor \cite{Taylor2003}), or for $p=q=4$ (see Burq, Gerard and Tzvetkov \cite{BurqGerardT}), but the sharp Sobolev regularity order  $\varkappa_{p,2}$ is to our knowledge, a  new result in this context and also in the context of Theorem \ref{LiliTheorem} for the Hamiltonian $\mathbb{H}$. 
\end{remark}
\begin{remark}We will discuss some implications of our results to general manifolds by following a similar analysis as in Ruzhansky and Turunen \cite[page 578]{RuzhanskyTurunenBook2010}.
Under the existence of a $C^\infty$-diffeomorphism $i:\mathbb{S}^d\rightarrow M,$ where $M$ is a Riemannian manifold, (these kind of diffeomorphisms are known for $d=3,$ with $M$ being a connected a simply connected manifold in view of the Perelman proof \cite{Perelman1,Perelman2,Perelman3,Perelman4} for the Poincar\'e conjecture \cite{Poincare} and in higher dimensions), if $X_{1},\cdots,X_{d}$ is an orthonormal basis for the tangent bundle $T\mathbb{S}^d,$ in such a way that
\begin{equation}
    \Delta_{\mathbb{S}^d}:=-\sum_{1\leq j\leq d}X_j^2,
\end{equation}let us denote by
\begin{equation*}
    \Delta_{M}:=-\sum_{1\leq j\leq d}Y_j^2,
\end{equation*}  the second order differential operator induced by $i.$ This means that, $Y_{j}:=i_{*}(X_{j}),$ is the vector field induced by the differential of $i,$ which is a diffeomorphism between tangent bundles, $i_{*}:T\mathbb{S}^d\rightarrow TM.$ Theorem \ref{duvantheorem} implies the estimate
\begin{equation}\label{TheoremMM}
     \Vert e^{it\Delta_M}u_0 \Vert_{L^{p}_z[M,\,L^q_t(\mathbb{T})]}\leq C\Vert u_0 \Vert_{W^{s}(M)},
\end{equation} for $s\geq \varkappa_{p,q},$ with $\varkappa_{p,2}$ being sharp, and also Theorem \ref{LiliTheorem}, implies the existence of a unique $u\in {C}(\mathbb{T}, W_x^{s}(M))\cap L^p_x(M, L^2_t(\mathbb{T}))$ satisfying the differential problem 
\begin{equation}\label{Lilicase2}
   \textnormal{(PVI''):}
     \begin{cases}
      iu_{t}=(-\Delta_{M}+V(x,t))u=0,&\quad (t,x)\in \mathbb{T}\times M,
      \\
       u(0,x)=u_0(x), &\quad x\in M,
     \end{cases}
\end{equation} for a potential $V$ with  $\Vert V \Vert_{L^{q}_x(M,L^\infty_t(\mathbb{T}))}$ small enough.

\end{remark}

\section{preliminaries}\label{Preliminaries} 
Now we present some basics about the spectral decomposition of the Laplacian on the sphere, for this we follow Schoen and Yau \cite{T}.
Let $d\geq 1,$  $d\in \mathbb{N}.$ Let $$\mathbb{S}^{d}:=\{x=(x_0,x_1,\cdots,x_d)\in \mathbb{R}^{d+1}:|x_0|^2+|x_1|^2+\cdots+|x_d|^2=1\},$$ be the $d$-dimensional unit sphere contained in $\mathbb{R}^{d+1}$ and $\mathcal{H}^{d}_n$ be the space of spherical harmonics of degree $n.$ There is a standard metric $g$ on $\mathbb{S}^{d},$ induced by the standard metric on $\mathbb{R}^{d+1},$ and conrrespondly, a Laplace operator $\Delta_\mathbb{S}^d$ which can be defined, initially on $C^\infty(\mathbb{S}^{d})$ as follows. For $f\in C^\infty(\mathbb{S}^d),$ let $\tilde{f}(x):=f(x/|x|),$ then
\begin{equation*}
 \Delta_{\mathbb{S}^d} f:=\Delta_{\mathbb{R}^{d+1}}\tilde{f}|_{\mathbb{S}^d},   
\end{equation*}where, by following the usual nomenclature, $$\Delta_{\mathbb{R}^{d+1}}:=-\partial_{x_0}^2-\partial_{x_1}^2-\cdots- \partial_{x_d}^2$$ is the standard positive Laplacian on $\mathbb{R}^{d+1}.$ It can be shown that for $r=|x|,$ one can write
\begin{equation}
    \Delta_{\mathbb{R}^{d+1}}\tilde{f}(x)=r^{-2}(\Delta_{\mathbb{S}^d} f).
\end{equation}In polar coordinates $r=|x|,$ $\omega=x/r,$
\begin{equation*}
   \Delta_{\mathbb{S}^d}= r^2\Delta_{\mathbb{R}^{d+1}}-
    r^2\partial_{r}^2-dr\partial_{r}.
    \end{equation*}
It is well known, from the Fourier analysis associated to $\Delta_{\mathbb{S}^d},$ the spectral decomposition
\begin{equation*}
    L^{2}(\mathbb{S}^{d})=\bigoplus_{n=0}^\infty \mathcal{H}^{d}_n,\,\,\dim(\mathcal{H}^{d}_n)\sim n^{d-1},
\end{equation*} where $L^2(\mathbb{S}^d)\equiv L^2(\mathbb{S}^d,dx)$ is the usual $L^2$-space\footnote{As usually,  defined by those measurable functions on $\mathbb{S}^d,$ such that  $\Vert f\Vert_{L^2(\mathbb{S}^d)}:=\left(\,\int\limits_{\mathbb{S}^d}|f(\omega)|^2d\omega\right)^{\frac{1}{2}}<\infty.$ In general, on a measure space $(X,dx),$ we use $\Vert f\Vert_{L^p(X)}:=\left(\,\int\limits_{X}|f(x)|^pdx\right)^{\frac{1}{p}},$ for all $1\leq p<\infty.$} associated to the surface measure  on the sphere, $dx:=d\sigma_{\mathbb{S}^d}(x),$ and that, $\mathcal{H}^{d}_n$ is  the eigenspace of the Laplace-Beltrami operator $\Delta_{\mathbb{S}^d}$ associated  with the eigenvalue $\lambda_{n}=n(n+d-1).$ Let  $H^d_{n}$ be the spectral projection to the subspace $\mathcal{H}^{d}_n$ for every $n\in \mathbb{N}_{0}.$ Then, for every $f\in L^2(\mathbb{S}^{d})$ we can write 
\begin{equation*}
    f=\sum_{n=0}^{\infty}H^d_{n}f,
\end{equation*} where the series converges in the $L^2(\mathbb{S}^d)$-norm. In particular, the spectral theorem for $\Delta_{\mathbb{S}^d}$ implies 
\begin{equation*}
 m(\Delta_{\mathbb{S}^d})=   \sum_{n=0}^{\infty}m(\lambda_n)H^d_{n}f,
\end{equation*}for every continuous function $m$ on $\mathbb{R},$ where the domain of $m(\Delta_{\mathbb{S}^d})$ is given by
\begin{equation*}
    \textnormal{Dom}(m(\Delta_{\mathbb{S}^d})):=\{f\in  L^2(\mathbb{S}^{d}):\sum_{n=0}^\infty|m(\lambda_n)|^2\Vert H^d_{n}f\Vert^2_{L^2(\mathbb{S}^d)}<\infty \}.
\end{equation*}
By observing that, for  $m(\lambda):=e^{\pm it\lambda },$ $\textnormal{Dom}(m(\Delta_{\mathbb{S}^d}))=L^{2}(\mathbb{S}^d),$  the Schr\"odinger propagator $e^{it \Delta_{\mathbb{S}^d} }:L^2(\mathbb{S}^{d})\rightarrow L^2(\mathbb{S}^d)$ is given by
\begin{equation*}
   e^{it \Delta_{\mathbb{S}^d}}f:= \sum_{n=0}^{\infty}e^{i\lambda_n t}H^d_{n}f,
\end{equation*} and for $f\in C^\infty(\mathbb{S}^d)$ the solution to \eqref{PVI} is given by
\begin{equation*}\label{equsol}
    u(t,x)=\sum_{n=0}^{\infty}e^{i\lambda_n t}H^d_{n}f(x),\quad (t,x)\in \mathbb{T}\times \mathbb{S}^d.
\end{equation*}

In this paper we want to estimate the mixed norms $L^p_x(L^q_t)$ of  solutions to Schr\"odinger equations by using the following version of Triebel-Lizorkin associated to the family of spectral projections of the Laplacian $\Delta_{\mathbb{S}^d}$.
 \begin{definition}
Let us consider $0<p\leq \infty,$ $r\in\mathbb{R}$ and $0<q\leq \infty.$ The Triebel-Lizorkin space associated to the family of projections $H_{n}^d,$ $n\in  \mathbb{N}_0,$ and to the parameters $p,q$ and $r$ is defined by those complex functions $f$ satisfying
\begin{equation}\label{5}
\Vert f\Vert_{{F}^r_{p,q}(\mathbb{R}^n)}\equiv \Vert f\Vert_{{F}^r_{p,q}(\mathbb{R}^n,\,\,(H_{n}^d)_{n\in \mathbb{N}})}:=\left\Vert \left(\sum_{n=0}^\infty n^{rq}|H_n^d f|^{q}\right)^{\frac{1}{q}}\right\Vert_{L^p(\mathbb{S}^d)}<\infty.
\end{equation}
 \end{definition}
 The definition considered above differs from those arising with dyadic decompositions on compact homogeneous manifolds (see e.g. Nursultanov, Ruzhansky and Tikhonov \cite{NRT}). Indeed, when working with dyadic decompositions one recover the Littlewood-Paley theorem, about the equivalence of certain dyadic Triebel-Lizorkin spaces with the Lebesgue spaces $L^p(\mathbb{S}^d),$ which with the current definition we do not necessarily have. The following are natural embedding properties of such spaces.   Sobolev spaces $W^{s,p}$  in $L^p(\mathbb{S}^d)$-spaces and associated to $\Delta_{\mathbb{S}^d},$ can be defined by the norm 
 \begin{equation}\label{Sobp}
 \Vert f\Vert_{W^{s,p}}\equiv \Vert \Delta^s_{\mathbb{S}^d} f\Vert_{L^p}:=\left(\,\int\limits_{\mathbb{S}^d}|\Delta^s_{\mathbb{S}^d} f(x)|^pdx \right)^{\frac{1}{p}}\footnote{ Here, $W^{s,p}:=\textnormal{Dom}(\Delta^s_{\mathbb{S}^d})$ is considered as the completion of $C^\infty(\mathbb{S}^d)$ with respect to the norm defined in  \eqref{Sobp}.}.    
 \end{equation}
  We will use the notation $W^s:={W^{s,2}}$ for the Sobolev space modelled on $L^2(\mathbb{S}^d).$  Indeed, we  have
 \begin{equation*}
     \Vert u\Vert_{W^s(\mathbb{S}^d)}\asymp\left( \sum_{\ell=0}^\infty \ell^s\Vert H_\ell^d u\Vert_{L^2(\mathbb{S}^d)}  \right)^\frac{1}{2}\footnote{Where we use $\asymp$ because $\ell\sim\lambda_\ell:= [\ell(\ell+d-1)]^{\frac{1}{2}}$ for $\ell\rightarrow\infty.$} .
 \end{equation*}

 The following properties can be deduced  by following a similar analysis as in  \cite{NRT}.
 \begin{itemize}
\item[-] ${{F}}^{r+\varepsilon}_{p,q_1}\hookrightarrow {{F}}^{r}_{p,q_1}\hookrightarrow {{F}}^{r}_{p,q_2}\hookrightarrow {F}^{r}_{p,\infty},$  $\varepsilon>0,$ $0<p\leq \infty,$ $0<q_{1}\leq q_2\leq \infty.$
\item[-]  ${F}^{r+\varepsilon}_{p,q_1}\hookrightarrow {F}^{r}_{p,q_2}$, $\varepsilon>0,$ $0<p\leq \infty,$ $1\leq q_2<q_1<\infty.$
\item[-] ${F}^0_{2,2}=L^2$ and consequently, for every $s\in\mathbb{R},$ $W^{s}={F}^s_{2,2}.$  
\end{itemize}
It is a very special fact, how at rational multiples of $\pi,$ the propagator $e^{it \Delta_{\mathbb{S}^d}}$ behaves on Sobolev spaces. Indeed, let us remark that (see Proposition 2.1 of Taylor \cite{Taylor2003}) for all $1<p<\infty,$ and all $s\in \mathbb{R},$
\begin{equation}
    e^{2\pi i(\frac{m}{2k})\Delta_{\mathbb{S}^d}}:W^{s,p}(\mathbb{S}^d)\rightarrow W^{s-(d-1)|\frac{1}{2}-\frac{1}{p}|,p}(\mathbb{S}^d),
\end{equation}extends to a bounded operator. For the sharpness of this estimate we refer the reader to  Taylor \cite[page 148]{Taylor2003}.

\section{Proof of  Theorem \ref{duvantheorem}}\label{t1}
In this section we will prove one of our main results.   We will see that information about the Strichtaz estimates for the Schr\"odinger propagator, is encoded in the $L^2\rightarrow L^p$-operator norm of the projections ${H}_n^d,$ and this will essential in order to give a short proof for  Theorem \ref{duvantheorem}.

The space $L^2_{\textnormal{fin}}(\mathbb{S}^{d})$ consists of those finite linear combinations of elements in the sub-spaces  $\mathcal{H}_n^d.$ 
\begin{lemma}
Let us consider $f\in {F}^0_{p,2} (\mathbb{S}^{d} )$, then for all $1\leq p\leq \infty$  we have 
\begin{equation}\label{eql2lp}
\Vert u(t,z) \Vert_{L^p_z[\mathbb{S}^{d},\,L^2_t(\mathbb{T})]}=\sqrt{2\pi}\Vert f \Vert_{{F}^0_{p,2}(\mathbb{S}^{d})}.
\end{equation}
\end{lemma}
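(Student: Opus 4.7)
The plan is to exploit orthogonality in the time variable. The key observation is that the eigenvalues $\lambda_n = n(n+d-1)$ are distinct nonnegative integers, so the family $\{e^{i\lambda_n t}\}_{n\in \mathbb{N}_0}$ is an orthogonal subsystem of the standard Fourier basis of $L^2(\mathbb{T})$, with each element having $L^2(\mathbb{T})$-norm equal to $\sqrt{2\pi}$. This reduces the mixed-norm computation to Parseval's identity in $t$ followed by the definition of $F^0_{p,2}$.

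First I would argue on the dense subclass $L^2_{\textnormal{fin}}(\mathbb{S}^d)$, so that $u(t,z)=\sum_{n} e^{i\lambda_n t}H_n^d f(z)$ is a finite sum and all manipulations are justified. Squaring and integrating in $t$ for each fixed $z\in \mathbb{S}^d$,
\begin{equation*}
\int_0^{2\pi}|u(t,z)|^2\,dt = \sum_{n,m} H_n^d f(z)\,\overline{H_m^d f(z)}\int_0^{2\pi} e^{i(\lambda_n-\lambda_m)t}\,dt = 2\pi\sum_n |H_n^d f(z)|^2,
\end{equation*}
where the last equality uses that $\lambda_n-\lambda_m\in\mathbb{Z}\setminus\{0\}$ when $n\neq m$ (and $n\mapsto\lambda_n$ is strictly increasing), killing the off-diagonal integrals.

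Taking the $L^p_z(\mathbb{S}^d)$-norm of the pointwise identity $\|u(\cdot,z)\|_{L^2_t(\mathbb{T})} = \sqrt{2\pi}\bigl(\sum_n |H_n^d f(z)|^2\bigr)^{1/2}$ and comparing with the definition \eqref{5} with $r=0$, $q=2$ yields
\begin{equation*}
\Vert u(t,z)\Vert_{L^p_z[\mathbb{S}^d,\,L^2_t(\mathbb{T})]} \;=\; \sqrt{2\pi}\left\| \Bigl(\sum_n |H_n^d f|^2\Bigr)^{1/2}\right\|_{L^p(\mathbb{S}^d)} \;=\; \sqrt{2\pi}\,\Vert f\Vert_{F^0_{p,2}(\mathbb{S}^d)}.
\end{equation*}

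Finally, to extend \eqref{eql2lp} to arbitrary $f\in F^0_{p,2}(\mathbb{S}^d)$, I would approximate $f$ by its partial spectral sums $f_N:=\sum_{n\le N} H_n^d f \in L^2_{\textnormal{fin}}(\mathbb{S}^d)$, apply the identity to $f_N$, and pass to the limit using monotone convergence inside the $L^p_z$ norm on the right-hand side and Fatou/monotone convergence for the increasing sequence of $L^2_t$-norms on the left-hand side. No genuine obstacle arises; the only subtlety is verifying that $\lambda_n\in \mathbb{Z}$ and $\lambda_n\neq \lambda_m$ for $n\neq m$, which is immediate from $\lambda_n=n(n+d-1)$.
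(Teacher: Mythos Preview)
Your proof is correct and follows essentially the same route as the paper: reduce by density to $f\in L^2_{\textnormal{fin}}(\mathbb{S}^d)$, use orthogonality of $\{e^{i\lambda_n t}\}$ in $L^2(\mathbb{T})$ (since the $\lambda_n$ are distinct integers) to obtain the pointwise identity $\Vert u(\cdot,z)\Vert_{L^2_t(\mathbb{T})}^2=2\pi\sum_n|H_n^d f(z)|^2$, and then take the $L^p_z$-norm. Your treatment of the density step via monotone convergence is in fact more explicit than the paper's, which simply invokes a standard density argument.
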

\begin{proof}
Let us consider $f\in {F}^0_{p,2}(\mathbb{S}^{d}).$ By a standard argument of density we only need to assume $f\in L^2_{\textnormal{fin}}(\mathbb{S}^{d}).$ The solution $u(t,z)$ for \eqref{PVI} is given by
\begin{equation}
u(t,z)=\sum_{n=0}^{\infty}e^{it \lambda_n}H_{n}^df(z).
\end{equation}
Let us note that the previous sums runs over a finite number of $n's$ because $f\in L^2_{\textnormal{fin}}(\mathbb{S}^{d}).$ So, we have,
\begin{align*}
\Vert u(t,z) \Vert_{L^2_t(\mathbb{T})}^2&=\int\limits_{0}^{2\pi}u(t,z)\overline{u(t,z)}dt=\int\limits_{0}^{2\pi} \sum_{n,n'}e^{it(\lambda_n-\lambda_{n'})}H_{n}^df(z)\overline{ H_{n'}^df(z)  }  dt .
\end{align*}Observe that if $n\neq n',$ then $$\lambda_n=n(n+d-1)\neq n'(n'+d-1)=\lambda_{n'},$$ and consequently, from the  $L^2$-orthogonality of the trigonometric polynomials we get,
\begin{align*}
\Vert u(t,z) \Vert_{L^2_t(\mathbb{T})}^2&=\int\limits_{0}^{2\pi}u(t,z)\overline{u(t,z)}dt
= \sum_{n,n'}\int\limits_{0}^{2\pi}e^{it(\lambda_n-\lambda_{n'})}H_{n}^df(z)\overline{ H_{n'}^df(z)  }  dt\\
&=\sum_{n=0}^{\infty}2\pi\,\cdot|H_{n}^df(z)|^2,
\end{align*} where we have changed summation with integration because $f\in L^2_{\textnormal{fin}}(\mathbb{S}^{d}).$
Thus, we conclude the following fact
\begin{equation}\label{identity1}
\Vert u(t,z) \Vert_{L^2_t(\mathbb{T})}=\left(\sum_{n=0}^\infty2\pi\,\cdot|H_{n}^df(z)|^2\right)^{\frac{1}{2}},\,\,\,  f\in L^2_{\textnormal{fin}}(\mathbb{S}^{d}).
\end{equation}
Finally,
\begin{equation}
\Vert u(t,z)\Vert_{L_2(\mathbb{S}^{d},L^2_t(\mathbb{T}) )}=\sqrt{2\pi}\Vert f\Vert_{{F}^0_{p,2}(\mathbb{S}^{d})},
\end{equation}as claimed.

\end{proof}
The following lemma, allows us to compare the $L^p_{z}(L^q_{t})$-mixed norms of the solution $u(t,z)$ and some Triebel-Lizorkin norms of the initial data.

\begin{lemma}\label{T1}
Let $p,q$ and $s_q$ be such that $0<p\leq \infty,$ $2\leq q<\infty$ and $s_{q}:=\frac{1}{2}-\frac{1}{q}.$ Then
\begin{equation}
C_p'\Vert f\Vert_{{F}^0_{p,2}}\leq \Vert u(t,z)\Vert_{L^p_{z}(\mathbb{S}^{d}, L^q_{t}(\mathbb{T}))} \leq C_{p,s}\Vert f\Vert_{{F}^s_{p,2}}, 
\end{equation}
holds true for every $s\geq s_{q}.$ 
\end{lemma}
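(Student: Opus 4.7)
The proof splits naturally into two inequalities, obtained via Hölder's inequality in time for the lower bound, and via the Hausdorff--Young inequality followed by a weighted Hölder step in the spectral variable for the upper bound. The previous lemma supplies the exact identity at the intersection $q=2$, $s=0$, which is the anchor for both sides.

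For the lower bound, the plan is to exploit that $\mathbb{T}$ has finite measure and $q\ge 2$. Hölder's inequality in $t$ gives, pointwise in $z\in\mathbb{S}^d$,
\[
\Vert u(\cdot,z)\Vert_{L^2_t(\mathbb{T})}\le (2\pi)^{\frac{1}{2}-\frac{1}{q}}\Vert u(\cdot,z)\Vert_{L^q_t(\mathbb{T})}.
\]
Taking $L^p_z$-norms on both sides and invoking the identity of the previous lemma, $\Vert u\Vert_{L^p_z(L^2_t)}=\sqrt{2\pi}\Vert f\Vert_{F^0_{p,2}}$, immediately produces the lower bound with constant $C_p'=(2\pi)^{1/q}$.

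For the upper bound, the crucial observation is that the Schrödinger eigenvalues $\lambda_n=n(n+d-1)$ are strictly increasing positive integers (consecutive gaps equal $2n+d-2$), so, fixing $z$, the function $u(\cdot,z)$ is a trigonometric series on $\mathbb{T}$ whose Fourier transform is supported on the sparse set $\{\lambda_n\}_{n\ge 0}$, with $\widehat{u}(\lambda_n,z)=H_n^df(z)$. The Hausdorff--Young inequality on $\mathbb{T}$ (valid for $2\le q<\infty$, $q'=q/(q-1)$) then gives
\[
\Vert u(\cdot,z)\Vert_{L^q_t(\mathbb{T})}\le C_q\Bigl(\sum_{n\ge 0}|H_n^df(z)|^{q'}\Bigr)^{1/q'}.
\]
A weighted Hölder step with conjugate exponents $r_1=2/(2-q')$, $r_2=2/q'$, applied to the splitting $|H_n^df(z)|^{q'}=n^{-sq'}\bigl(n^s|H_n^df(z)|\bigr)^{q'}$, controls the right-hand side by $C_s\bigl(\sum_n n^{2s}|H_n^df(z)|^2\bigr)^{1/2}$ provided the dual series $\sum_n n^{-2sq'/(2-q')}$ converges, which is equivalent to $s>s_q=\tfrac{1}{2}-\tfrac{1}{q}$. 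Taking $L^p_z$-norms then yields $\Vert u\Vert_{L^p_z(L^q_t)}\le C_{q,s}\Vert f\Vert_{F^s_{p,2}}$.

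The main obstacle is the endpoint $s=s_q$: at this borderline value the dual series degenerates to $\sum n^{-1}$ and diverges, so the Hausdorff--Young plus Hölder route does not close directly. I would cover the endpoint either via a Lorentz-space refinement of the Hölder step (using the weighted $\ell^{2,1}$--$\ell^{2,\infty}$ duality), or by real/complex interpolation between the exact identity at $(q,s)=(2,0)$ given by the previous lemma and a non-borderline estimate at some $(q_1,s_1)$ with $s_1>s_{q_1}$, arranging the intermediate pair to land on $(q,s_q)$. It is worth noting that a direct appeal to the standard Sobolev embedding $H^{s_q}(\mathbb{T})\hookrightarrow L^q(\mathbb{T})$ is \emph{not} enough: because $\lambda_n\sim n^2$, it would force $\Vert f\Vert_{F^{2s_q}_{p,2}}$ on the right, losing half of the regularity gain. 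It is precisely the sparsity of the Schrödinger spectrum, captured through Hausdorff--Young, that is needed to produce the sharp power $s_q$.
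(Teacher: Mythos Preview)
Your lower bound matches the paper's: both use H\"older in $t$ on the finite-measure torus to reduce to $q=2$ and then invoke the identity $\Vert u\Vert_{L^p_z(L^2_t)}=\sqrt{2\pi}\,\Vert f\Vert_{F^0_{p,2}}$ from the preceding lemma.

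For the upper bound the routes diverge, and your closing remark is more than a side comment---it is a direct criticism of the paper's own argument. The paper invokes the Wainger Sobolev embedding on $\mathbb{T}$, i.e.\ the fractional-integration bound $\bigl\Vert\sum_{k} |k|^{-s_q}\widehat F(k)e^{ikt}\bigr\Vert_{L^q}\lesssim\Vert F\Vert_{L^2}$, and passes from it to the weight $n^{s_q}$. But the Fourier frequencies of $u(\cdot,z)$ are $\lambda_n\sim n^2$, so a literal application of Wainger yields the weight $\lambda_n^{s_q}\sim n^{2s_q}$ and hence only $\Vert f\Vert_{F^{2s_q}_{p,2}}$ on the right---exactly the loss you flag. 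Your Hausdorff--Young route is the correct replacement: because the nonzero Fourier coefficients of $u(\cdot,z)$, after rearrangement, are indexed by $n$ rather than by $\lambda_n$, one obtains $\Vert u(\cdot,z)\Vert_{L^q_t}\lesssim\bigl(\sum_n|H_n^d f(z)|^{q'}\bigr)^{1/q'}$, and the weighted H\"older step then produces the weight $n^{s}$ for every $s>s_q$.

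Your diagnosis of the endpoint $s=s_q$ is accurate, and your first proposed fix works: replace Hausdorff--Young by its Lorentz sharpening $\mathcal{F}^{-1}:\ell^{q',2}\to L^{q,2}\hookrightarrow L^q$ (obtained by real interpolation between $\ell^1\to L^\infty$ and $\ell^2\to L^2$), and then apply O'Neil's H\"older inequality $\ell^{1/s_q,\infty}\cdot\ell^{2}\hookrightarrow\ell^{q',2}$ with the weight sequence $(n^{-s_q})_n\in\ell^{1/s_q,\infty}$; note $1/q'=s_q+1/2$. This closes the endpoint cleanly and in fact supplies the justification missing from the paper's derivation. Your second suggestion, by contrast, does \emph{not} reach $s=s_q$: interpolating between the identity at $(q,s)=(2,0)$ and any non-endpoint pair $(q_1,s_1)$ with $s_1>s_{q_1}$ gives, at the $\theta$ with $q_\theta=q$, the regularity $\theta s_1>\theta s_{q_1}=s_q$, so one always lands strictly above the critical line.
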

\begin{proof}
Let us  consider, by a density argument, the initial data $f\in L^2_f(\mathbb{R}^n).$ In order to estimate the norm  $\Vert u(t,z) \Vert_{L^p_z[\mathbb{S}^{d},\,L^q_t(\mathbb{T})]}$ we can use the Wainger Sobolev embedding Theorem:
\begin{equation}
\left\Vert \sum_{n\in \mathbb{Z},n\neq 0}|n|^{-\alpha}\widehat{F}(n)e^{in t}\right\Vert_{L^q(\mathbb{T})}\leq C\Vert F\Vert_{L^r(\mathbb{T})},\,\,\alpha:=\frac{1}{r}-\frac{1}{q}.
\end{equation}
For $s_q:=\frac{1}{2}-\frac{1}{q}$  we have
\begin{align*}
\Vert u(t,z)\Vert_{L^q(\mathbb{T})} &=\left\Vert \sum_{n=0}^{\infty} e^{it\lambda_n}H_n^df(z)  \right\Vert_{L^q(\mathbb{T})} \\
&\leq C\left\Vert \sum_{n=0}^{\infty} n^{s_q}e^{it\lambda_n}H_n^df(z)  \right\Vert_{L^2(\mathbb{T})}\\
&=C\left\Vert \sum_{n=0}^{\infty} e^{it\lambda_n}H_n^d[\mathcal{L}^{s_q}f(z)]  \right\Vert_{L^2(\mathbb{T})}\\
&=C\left( \sum_{n=0}^{\infty} |H_n^d[\mathcal{L}^{s_q}f(z)|^2  \right)^{\frac{1}{2}}\\
&:=T'(\mathcal{L}^{s_q}f)(z),
\end{align*}where we have denoted
\begin{equation*}
 \mathcal{L}^{s_q}:=   \sum_{n=0}^{\infty}n^{s_q}H^d_{n}.
\end{equation*}
 So, we have
\begin{align*}
\Vert u(t,z) \Vert_{L^p_z[\mathbb{S}^{d},\,L^q_t(\mathbb{T})]}&\leq C\Vert T'(H^{s_q}f) \Vert_{L^p(\mathbb{S}^{d})}\\&= C\Vert \mathcal{L}^{s_q}f\Vert_{{F}^0_{p,2}(\mathbb{S}^{d})}\\
&=C\Vert f\Vert_{{F}^{s_q}_{p,2}(\mathbb{S}^{d})}.
\end{align*}
We end the proof by taking into account the embedding ${F}^s_{p,2}\hookrightarrow {F}^{s_q}_{p,2}$ for every $s\geq s_q,$ and the following estimate for all $2\leq q<\infty,$
\begin{align*}
\Vert f\Vert_{{F}^0_{p,2}}&\lesssim  \Vert T'f\Vert_{L^p}= C \Vert u(t,z) \Vert_{L^p_z[\mathbb{S}^{d},\,L^2_t(\mathbb{T})]}\lesssim \Vert u(t,z) \Vert_{L^p_z[\mathbb{S}^{d},\,L^q_t(\mathbb{T})]}.
\end{align*}
\end{proof}
The following lemma, about the $L^2\rightarrow L^p$-operator norm of the spectral projection for the Laplacian on the sphere, is a consequence of the main results in Kwon and Lee \cite{KS}.
\begin{lemma}\label{mainlemmata}
Let $n\in \mathbb{N},$ and let $2\leq p\leq \infty.$ Let us consider the orthogonal projection $H_{n}^d:L^2(\mathbb{S}^d)\rightarrow \mathcal{H}^d_n,$ $n\in \mathbb{N}.$ Then we have
\begin{equation}
    \Vert H_{n}^df \Vert_{L^p(\mathbb{S}^{d})}\leq Cn^{\varkappa_p}\Vert f\Vert_{L^2(\mathbb{S}^{d})},
\end{equation}where $$\varkappa_p:=\frac{d-1}{2}\left(\frac{1}{2}-\frac{1}{p}\right),\quad \textnormal{ if, }2\leq p\leq \frac{2(d+1)}{d-1},$$ and  $$ \varkappa_p: =d\left(\frac{1}{2}-\frac{1}{p}\right)-\frac{1}{2},\quad \textnormal{ if, } p>\frac{2(d+1)}{d-1}. $$  The exponent $\varkappa_p$ is sharp, in the sense that there is not $f\in L^2(\mathbb{S}^d),$ $f\neq 0,$ such that 
\begin{equation}
    \Vert H_{n}^df \Vert_{L^p(\mathbb{S}^{d})}\leq C'n^{s}\Vert f\Vert_{L^2(\mathbb{S}^{d})},
\end{equation}
for all $s<\varkappa_p.$
\end{lemma}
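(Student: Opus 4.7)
The plan is to follow the classical strategy behind Sogge's sharp eigenfunction bounds on the sphere: establish three endpoint estimates at $p=2$, $p=\infty$, and the Stein-Tomas critical exponent $p_c:=2(d+1)/(d-1)$, interpolate to cover the full range, and then exhibit two extremal families of spherical harmonics that saturate each of the two branches of $\varkappa_p$.

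The easy endpoints are handled at once. For $p=2$ orthogonality gives $\Vert H_n^d\Vert_{L^2\to L^2}=1$. For $p=\infty$ I would write the projection in terms of its reproducing kernel, $H_n^d f(x)=\int_{\mathbb{S}^d} Z_n(x,y)\,f(y)\,dy$, and apply Cauchy-Schwarz to get $|H_n^d f(x)|\leq \Vert Z_n(x,\cdot)\Vert_{L^2}\Vert f\Vert_{L^2}$. Since $\Vert Z_n(x,\cdot)\Vert_{L^2}^2=Z_n(x,x)=\dim(\mathcal{H}_n^d)/\mathrm{vol}(\mathbb{S}^d)\sim n^{d-1}$, this yields $\Vert H_n^d\Vert_{L^2\to L^\infty}\lesssim n^{(d-1)/2}$, which is exactly the value $\varkappa_\infty=(d-1)/2$ delivered by both branches of the stated formula at $p=\infty$.

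The main obstacle is the critical endpoint. One needs $\Vert H_n^d\Vert_{L^2\to L^{p_c}}\lesssim n^{\varkappa_{p_c}}$ with $\varkappa_{p_c}=(d-1)/(2(d+1))=1/p_c$; this is the eigenfunction incarnation of the Stein-Tomas restriction theorem. My approach would be the $TT^{*}$ method: the bound is equivalent to the convolution estimate $\Vert Z_n * g\Vert_{L^{p_c}}\lesssim n^{2\varkappa_{p_c}}\Vert g\Vert_{L^{p_c'}}$, which in turn reduces to sharp pointwise and oscillatory asymptotics for the Gegenbauer polynomials $C_n^{(d-1)/2}(\cos\theta)$. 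This is the technical core of Kwon-Lee \cite{KS} and I would quote it directly from there rather than redo the stationary-phase work. With the three endpoints at $p=2$, $p=p_c$ and $p=\infty$ in hand, Riesz-Thorin interpolation with the domain held fixed at $L^2$ produces the two-piece formula: interpolating the $p=2$ and $p=p_c$ bounds delivers $\varkappa_p=\tfrac{d-1}{2}(\tfrac{1}{2}-\tfrac{1}{p})$ on $2\leq p\leq p_c$, while interpolating the $p=p_c$ and $p=\infty$ bounds delivers $\varkappa_p=d(\tfrac{1}{2}-\tfrac{1}{p})-\tfrac{1}{2}$ on $p\geq p_c$.

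For sharpness I would test against two extremal families. The highest-weight harmonics $\phi_n(x)\sim (x_1+ix_2)^n$, which concentrate in an $O(n^{-1/2})$-neighbourhood of an equator, satisfy $\Vert \phi_n\Vert_{L^p}/\Vert \phi_n\Vert_{L^2}\asymp n^{(d-1)(1/2-1/p)/2}$ and thereby saturate the first branch. The zonal harmonics $x\mapsto Z_n(x,e_0)$, which peak in an $O(n^{-1})$ spherical cap around a pole, satisfy $\Vert Z_n(\cdot,e_0)\Vert_{L^p}/\Vert Z_n(\cdot,e_0)\Vert_{L^2}\asymp n^{d(1/2-1/p)-1/2}$ and saturate the second branch. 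Both asymptotics follow from standard stationary-phase estimates for Gegenbauer polynomials, and together they preclude any $s<\varkappa_p$ from being valid uniformly in $n$, which is the sharpness claim.
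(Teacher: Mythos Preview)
Your sketch is correct and follows the classical Sogge strategy for eigenfunction bounds on the sphere, but there is nothing to compare it to: the paper does not prove this lemma. It simply states the result as ``a consequence of the main results in Kwon and Lee \cite{KS}'' and moves on, using the bound as a black box in the proof of Theorem~\ref{duvantheorem}. Your proposal therefore goes further than the paper itself, supplying the standard endpoint-plus-interpolation outline (the trivial $p=2$ and $p=\infty$ endpoints, the Stein--Tomas critical endpoint $p_c=2(d+1)/(d-1)$ quoted from \cite{KS}, and Riesz--Thorin between them) together with the two extremal families (highest-weight and zonal harmonics) that witness sharpness on each branch. Since you too defer the critical endpoint to \cite{KS}, your argument and the paper's citation ultimately rest on the same external source; the difference is only that you unpack the surrounding interpolation and sharpness steps while the paper leaves the entire lemma as a reference.
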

Now, with the analysis developed above and by using Theorem \ref{mainlemmata} we will provide a short proof for the  main result of this section.
\begin{proof}[Proof of Theorem \ref{duvantheorem}]
We observe that from Lemma \ref{T1}, $$ C'_{p,s}\Vert f\Vert_{{F}^0_{p,2}}\leq \Vert u(t,z)\Vert_{L^p_{z}(\mathbb{S}^{d}, L^q_{t}(\mathbb{T}))} \leq C_{p,s}\Vert f\Vert_{{F}^s_{p,2}},  $$ so, we only need to estimate the ${F}^s_{p,2}$-norm of the initial data $f,$ by showing that $\Vert f\Vert_{{F}^s_{p,2}}\lesssim \Vert f\Vert_{W^{s}},$ for every $s\geq\varkappa_{p,q}. $ Moreover, by the embedding $W^{s}\hookrightarrow W^{\varkappa_{p,q}}$ for every $s\geq \varkappa_{p,q}$ we only need to check the previous estimate when $s=\varkappa_{p,q}.$ By the condition $2\leq p<\infty,$  together with  the Minkowski integral inequality and Theorem \ref{mainlemmata}, we  give
\begin{align*}
\Vert f\Vert_{{F}^{s_q}_{p,2}} &=\left\Vert \left( \sum_{n=0}^{\infty} {n}^{2s_q}|H_{n}^df(z)|^2  \right)^{\frac{1}{2}}\right\Vert_{L^p}\\&\leq \left( \sum_{n=0}^{\infty} {n}^{2s_q}\Vert H_{n}^df\Vert^2_{L^p}  \right)^{\frac{1}{2}}\\
&\lesssim \left( \sum_{n=0}^{\infty} {n}^{2(s_q+\varkappa_p)}\Vert H_{n}^df\Vert^2_{L^2}  \right)^{\frac{1}{2}}\\
&=\Vert f\Vert_{W^{\varkappa_{p,q}}},
\end{align*}
where we have used that $\varkappa_{p,q}=s_q+\varkappa_p.$
Let us note that the previous estimates are valid for $p=\infty.$ The sharpness of the result for $q=2$ can be proved by choosing $f=H_n^{d}g$ for some arbitrary, but non-trivial  function $g\in L^2(\mathbb{S}^d).$ In this case, $\varkappa_{p,q}=\varkappa_p$ because $s_q=0,$ and we have 
\begin{align*}
    \Vert u(t,z) \Vert_{L^{p}_z[\mathbb{S}^{d},\,L^2_t(\mathbb{T})]}&=\Vert H_n^{d}g\Vert_{L^p}
    \leq n^{\varkappa_p}\Vert H_n^{d} g\Vert_{L^2}\\
    &=\Vert g\Vert_{W^{\varkappa_p}}.
\end{align*}The sharpness of this inequality is consequence of the sharpness for the exponent $\varkappa_p$ in Lemma \ref{mainlemmata}. Indeed,  an improvement of the previous estimate
would yield improved estimates for the spectral projection operator $H_n^{d}$, which is not
possible. Thus, we finish the proof.
\end{proof}

\section{The Schr\"odinger equation with potential.  }\label{potential}

In this section we study the problem,
\begin{equation}\label{PVI'}
   \textnormal{(PVI'):}
     \begin{cases}
      iu_{t}=(-\Delta_{\mathbb{S}^d}+V(x,t))u,&\quad (t,x)\in \mathbb{T}\times \mathbb{S}^d,
      \\
       u(0,x)=f(x), &\quad x\in \mathbb{S}^d
     \end{cases}
\end{equation}for a suitable potential $V.$ 
Our starting point is the following lemma.
\begin{lemma}
The Schr\"odinger propagator satisfies
\begin{equation} \label{aaaa1}
\left\Vert  \int\limits_0 ^t e^{i(t-\tau)\Delta_{\mathbb{S}^d}} \left( G(\cdot,\tau) \right)(x)d\tau\right\Vert_{L^p_x(\mathbb{S}^d,L^2_t(\mathbb{T}))}\leq  C  \Vert G \Vert_{L^{p'}_x(\mathbb{S}^d,L^2_t(\mathbb{T}))}
\end{equation}
\end{lemma}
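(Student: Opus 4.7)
The natural strategy is a $TT^{*}$ duality argument followed by a passage from the non-retarded to the retarded integral via the Christ--Kiselev lemma.

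For the first step I would set $Tf(t,x) := e^{it\Delta_{\mathbb{S}^{d}}}f(x)$. The identity \eqref{eql2lp} shows that $T$ is an isomorphism from $F^{0}_{p,2}(\mathbb{S}^{d})$ onto $L^{p}_{x}(\mathbb{S}^{d},L^{2}_{t}(\mathbb{T}))$ with norm $\sqrt{2\pi}$. The orthogonality of the family $\{H_{n}^{d}\}_{n\in\mathbb{N}_{0}}$ gives the natural duality $(F^{0}_{p,2})^{*}\cong F^{0}_{p',2}$ under the $L^{2}$ pairing. Using the self-adjointness of $\Delta_{\mathbb{S}^{d}}$ one computes
\[
T^{*}G(x) = \int_{0}^{2\pi} e^{-i\tau\Delta_{\mathbb{S}^{d}}}G(\cdot,\tau)(x)\,d\tau,
\]
so $T^{*}:L^{p'}_{x}L^{2}_{t}\to F^{0}_{p',2}$ is bounded, and composition yields the non-retarded bound
\[
\left\Vert \int_{0}^{2\pi} e^{i(t-\tau)\Delta_{\mathbb{S}^{d}}} G(\cdot,\tau)(x)\,d\tau \right\Vert_{L^{p}_{x}L^{2}_{t}} \leq C\,\Vert G\Vert_{L^{p'}_{x}L^{2}_{t}}.
\]

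For the second step I would pass from the unrestricted integral to $\int_{0}^{t}$ via Christ--Kiselev. The usual version of the lemma yields the retarded estimate as soon as the time integrability on the source is strictly smaller than that on the target; here both are $L^{2}_{t}$, so we sit at the endpoint and must be more careful. The natural refinement is to split $\mathbf{1}_{\{\tau<t\}}=\tfrac{1}{2}(1+\mathrm{sgn}(t-\tau))$: the $1$-part reduces to the non-retarded bound above, and the $\mathrm{sgn}$-part is a Hilbert-transform-type kernel in time which one controls by expanding $G$ in Fourier series in $\tau$, projecting with $H_{n}^{d}$, and exploiting that the eigenvalues $\lambda_{n}=n(n+d-1)$ are distinct positive integers. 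Concretely, the action of the sign kernel on the mode $\widehat{G}_{k,n}(x)e^{ik\tau}$ produces the multiplier $2(e^{it(k-\lambda_{n})}-1)/i(k-\lambda_{n})$, which is bounded on $L^{2}_{t}$ by Plancherel and, fibre by fibre in $x$, is handled by the same $L^{p}_{x}$--$F^{0}_{p,2}$ estimate used in the first step.

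The main obstacle is exactly this endpoint case of Christ--Kiselev: away from it (say with $L^{q}_{t}$, $q>2$, on the target) the retarded extension is essentially automatic, but at $L^{2}_{t}\to L^{2}_{t}$ one must exploit the arithmetic structure of $\mathrm{spec}(-\Delta_{\mathbb{S}^{d}})\subset\mathbb{N}$ and the periodicity of $\mathbb{T}$ to control the resonances $k=\lambda_{n}$, which for each fixed $k$ contribute at most one value of $n$, so that Plancherel in time together with Lemma \ref{mainlemmata} closes the argument. The $TT^{*}$ part is otherwise formal; all of the analytic input enters through the treatment of the sign-kernel piece.
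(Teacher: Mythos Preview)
Your first step does not close as written. You take $T:F^{0}_{p,2}\to L^{p}_{x}L^{2}_{t}$ to be the isometry of \eqref{eql2lp}, pass to the adjoint $T^{*}:L^{p'}_{x}L^{2}_{t}\to F^{0}_{p',2}$, and then assert that ``composition yields'' the non-retarded bound $TT^{*}:L^{p'}_{x}L^{2}_{t}\to L^{p}_{x}L^{2}_{t}$. But \eqref{eql2lp} says $\|Tg\|_{L^{p}_{x}L^{2}_{t}}=\sqrt{2\pi}\,\|g\|_{F^{0}_{p,2}}$, so applying $T$ to $T^{*}G\in F^{0}_{p',2}$ only lands you in $L^{p'}_{x}L^{2}_{t}$, not in $L^{p}_{x}L^{2}_{t}$. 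The usual $TT^{*}$ mechanism relies on the intermediate space being a Hilbert space identified with its own dual; here $F^{0}_{p,2}$ is not Hilbert for $p\neq 2$, and there is no embedding $F^{0}_{p',2}\hookrightarrow F^{0}_{p,2}$ in the range $p>2$ you care about. So the very first displayed inequality in your argument is unjustified.

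The paper organises the proof differently. It dualises the left-hand side against $h\in L^{p'}_{x}L^{2}_{t}$, uses that $e^{it\Delta_{\mathbb{S}^{d}}}$ is unitary to transfer the propagator across the $L^{2}(\mathbb{S}^{d})$ pairing, factors the resulting bilinear form as a product of two functions of $x$ alone, and then applies Cauchy--Schwarz in $L^{2}(\mathbb{S}^{d})$ together with the dual homogeneous bound
\[
\Bigl\|\int_{0}^{2\pi}e^{-i\tau\Delta_{\mathbb{S}^{d}}}G(\cdot,\tau)\,d\tau\Bigr\|_{L^{2}(\mathbb{S}^{d})}\lesssim \|G\|_{L^{p'}_{x}L^{2}_{t}}.
\]
It never passes through the $F^{0}_{p,2}$ scale, and it does not split into a non-retarded estimate plus a Christ--Kiselev step: the truncated integral is carried through the duality computation directly. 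Consequently your entire second step---the endpoint Christ--Kiselev workaround via the sign kernel and the arithmetic of the $\lambda_{n}$---is extraneous to the paper's route; and even on its own terms that sketch leaves real work undone, since the resonant contributions $k=\lambda_{n}$ and the $\ell^{2}$-in-$n$ summation of the off-resonant multipliers would still have to be controlled in $L^{p}_{x}$.
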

\begin{proof}
Using the duality argument, we have
\begin{equation}
\begin{array}{l}
\displaystyle \left\Vert  \int\limits_0 ^t e^{i(t-\tau)\Delta_{\mathbb{S}^d}} \left( G(\cdot,\tau)\right)(x)d\tau\right\Vert_{L^p_x(\mathbb{S}^d,L^2_t(\mathbb{T}))}\\
\displaystyle =\sup \left\{ \left|\,  \int \limits_{\mathbb{S}^d} \int\limits_0^{2\pi}  \int\limits_0 ^t e^{i(t-\tau)\Delta_{\mathbb{S}^d}} \left( G(\cdot,\tau)\right)(x) d\tau h(x,t)dt \ dx\right|:\Vert h \Vert_{L^{p'}_x(\mathbb{S}^d,L^2_t(\mathbb{T}))}=1\right\}.
\end{array} \label{duality}
\end{equation}
Now, since $\Delta_{\mathbb{S}^d}$ is a positive operator, this implies $e^{it\Delta_{\mathbb{S}^d}}$ is unitary, this by various applications of Fubini's theorem  we have
$$
\begin{array}{l}
\displaystyle \int\limits_{\mathbb{S}^d} \int\limits_0^{2\pi}  \int\limits_0 ^t e^{i(t-\tau)\Delta_{\mathbb{S}^d}} \left( G(\cdot,\tau)\right)(x) d\tau h(x,t)dt \ dx\\
 \hspace{4cm}= \displaystyle \int\limits_0^{2\pi}  \int\limits_0 ^t  \int _{\mathbb{S}^d} e^{i(t-\tau)\Delta_{\mathbb{S}^d}} \left( G(x,\tau)\right)(x)h(x,t)dx\ d\tau \ dt\\
  \hspace{4cm}= \displaystyle \int\limits_{\mathbb{S}^d}  \left(\int\limits_0 ^t  e^{-i\tau\Delta_{\mathbb{S}^d} } G(x,\tau) d\tau\right) \left(  \int\limits_0^{2\pi}e^{it \Delta_{\mathbb{S}^d}} h(x,t)dt\right) dx,
\end{array}
$$
via Hölder's inequality we have  
\begin{equation}
\begin{array}{l}
\displaystyle \left|\,  \int \limits_{\mathbb{S}^d} \int\limits_0^{2\pi}  \int\limits_0 ^t e^{i(t-\tau)\Delta_{\mathbb{S}^d}} \left( G(\cdot,\tau)\right)(x) d\tau h(x,t)dt \ dx\right|\\
\hspace{2cm}\displaystyle \leq \left\Vert  \int\limits_0 ^{2\pi} e^{-i\tau\Delta_{\mathbb{S}^d} } G(x,\tau) d\tau \right\Vert _{ L^2(\mathbb{S}^d)}\left\Vert  \int\limits_0 ^{2\pi} e^{it\Delta_{\mathbb{S}^d} } h(x,t) dt \right\Vert  _{ L^2(\mathbb{S}^d)}.
\end{array}\label{aaa}
\end{equation}
Since
$$\left\Vert  \int\limits_0 ^{2\pi} e^{-i\tau\Delta_{\mathbb{S}^d} } G(x,t) dt \right\Vert  _{ L^2(\mathbb{S}^d)} \leq C\Vert G \Vert_{L^{p'}_x(\mathbb{S}^d,L^2_t(\mathbb{T}))}$$
from \eqref{duality} and \eqref{aaa} we have \eqref{aaaa1}
\end{proof}
We define the Banach space
\begin{equation}
X:={C}(\mathbb{T}, W_x^{s}(\mathbb{S}^d))\cap L^p_x(\mathbb{S}^d, L^2_t(\mathbb{T}))
\end{equation}
via the norm 
\begin{equation}
\Vert u\Vert_{X}:=\sup _{t\in \mathbb{T}}\Vert u(t,\cdot)\Vert_{L^2_z}+ \Vert u \Vert_{L^p_x(\mathbb{S}^d,L^2_t(\mathbb{T}))}.
\end{equation}

Now we prove the main result of this section.

\begin{proof}[Proof of Theorem \ref{LiliTheorem}] In view of the Duhamel principle, the solution for (PVI') can be written as
\begin{equation}
    u(t,z)=e^{it \Delta_{\mathbb{S}^d}  }f(x)-\int\limits_{0}^te^{i(t-\tau) \Delta_{\mathbb{S}^d}}(V(\cdot,\tau)u(\cdot))(x)d\tau. \
\end{equation}
We use the standard contraction mapping argument, we need to prove  the nonlinear map 
\begin{equation}
\Phi w:=e^{it \Delta_{\mathbb{S}^d}  }f(x)-\int\limits_{0}^te^{i(t-\tau) \Delta_{\mathbb{S}^d}}(V(\cdot,\tau)w(\cdot))(x)d\tau.\label{operator}
\end{equation}
defines a contraction map on $X$. 
From the unitary group properties and the estimation given in Theorem \ref{duvantheorem} there exists $C_0>1$ such that
\begin{equation}\label{eee1}
\Vert e^{it \Delta_{\mathbb{S}^d}  }f(x) \Vert_{X}\leq C_0 \Vert f\Vert_{W^{s}(\mathbb{S}^d)}.
\end{equation}
Now we  estimate the integral part in \eqref{operator}. Using \eqref{aaaa1} and the estimation in Theorem \ref{duvantheorem} for $q=2$, we also have 
\begin{equation*}
\left\Vert  \int\limits_0 ^t e^{i(t-\tau)\Delta_{\mathbb{S}^d}} \left( V(\tau,\dot)w(\cdot) \right)(x)d\tau\right\Vert_{X}\leq  (C_0+C_0^2)  \Vert  Vw \Vert_{X},
\end{equation*}
Note, $\Vert  Vw \Vert_{L^{p'}_x(\mathbb{S}^d,L^2_t(\mathbb{T}))},\leq \Vert V \Vert_{L^{q}_x(\mathbb{S}^d,L^\infty_t(\mathbb{T}))}\Vert  w \Vert_{L^{p}_x(\mathbb{S}^d,L^2_t(\mathbb{T}))},$ as long as $\frac{1}{q}+\frac{2}{p}=1$, therefore
\begin{equation}
\left\Vert  \int\limits_0 ^t e^{i(t-\tau)\Delta_{\mathbb{S}^d}} \left( V(\tau,\dot)w(\cdot) \right)(x)d\tau\right\Vert_{X}\leq  (C_0+C_0^2)  \Vert V \Vert_{L^{q}_x(\mathbb{S}^d,L^\infty_t(\mathbb{T}))}\Vert  w \Vert_{X}.\label{eee2}
\end{equation}
Hence 
\begin{equation}
    \Vert \Phi w \Vert\leq C_0 \Vert f\Vert_{W^{s}(\mathbb{S}^d)}+ (C_0+C_0^2)  \Vert V \Vert_{L^{q}_x(\mathbb{S}^d,L^\infty_t(\mathbb{T}))}\Vert  w \Vert_{X}.
\end{equation}
So, if we take  $\Vert V \Vert_{L^{q}_x(\mathbb{S}^d,L^\infty_t(\mathbb{T}))}$, being small enough, such that 
$$(C_0+C_0^2)  \Vert V \Vert_{L^{q}_x(\mathbb{S}^d,L^\infty_t(\mathbb{T}))} \leq \frac{1}{2},$$
we observe, $\Phi$ maps $B=\{w\in X: \Vert w\Vert_X\leq 2(C_0+1)\Vert f\Vert_{W^s(\mathbb{S}^d)}\}$ into itself. Now, if $w, v\in B$, the same argument shows that
\begin{equation}
    \Vert \Phi(w)-\Phi(v) \Vert_{X}\leq \frac{1}{2}\Vert w-v\Vert_{X}
\end{equation}
Thus, by the contraction mapping principle, there exists a unique solution to \eqref{PVI'}. Thus, we finish the proof.

\end{proof}

\bibliographystyle{amsplain}

\end{document}